\documentclass[11pt]{amsart}

\usepackage{amsmath,amssymb,amsthm}
\usepackage[margin=1.15in]{geometry}
\usepackage[
  colorlinks=true,
  linkcolor=blue,
  citecolor=blue,
  urlcolor=blue,
  pagebackref=true
]{hyperref}

\renewcommand*{\backref}[1]{}
\renewcommand*{\backrefalt}[4]{%
  \ifcase #1\relax
  \or {\footnotesize (Cited on p.~#2.)}%
  \else {\footnotesize (Cited on pp.~#2.)}%
  \fi
}

\newtheorem{theorem}{Theorem}
\newtheorem{lemma}[theorem]{Lemma}
\newtheorem{corollary}[theorem]{Corollary}
\newtheorem{remark}[theorem]{Remark}
\numberwithin{equation}{section}

\newcommand{\R}{\mathbb{R}}
\newcommand{\Z}{\mathbb{Z}}
\newcommand{\T}{\mathbb{T}}
\newcommand{\e}{\mathrm{e}}
\newcommand{\cmod}[1]{\,(\mathrm{mod}\,#1)}
\newcommand{\thmref}[1]{\hyperref[#1]{Theorem~\ref*{#1}}}
\newcommand{\lemref}[1]{\hyperref[#1]{Lemma~\ref*{#1}}}
\newcommand{\corref}[1]{\hyperref[#1]{Corollary~\ref*{#1}}}
\newcommand{\remref}[1]{\hyperref[#1]{Remark~\ref*{#1}}}

\title[Dense finite Sidon sets on arithmetic progressions]
{Dense finite Sidon sets on arithmetic progressions}
\author{Yuchen Ding}
\address{(Yuchen Ding) $^1$School of Mathematics, Yangzhou University, Yangzhou 225002, People's Republic of China}
\address{$^2$HUN-REN Alfr\'ed R\'enyi Institute of Mathematics, Budapest, Pf. 127, H-1364 Hungary}
\email{ycding@yzu.edu.cn}

\keywords{Sidon sets, asymptotic formula, Fourier uniformity}
\subjclass[2010]{11B75, 11B83}

\begin{document}

\begin{abstract}
Let \(S\subset \{1,2,\ldots,n\}\) be a Sidon set with \(|S|=n^{1/2}+O(n^{1/2-\delta})\) for some fixed \(\delta>0\). 
This article provides the following expected asymptotic formula
\[
\sum_{\substack{a\in S\\ a\equiv r\cmod{m}}} a^\ell
=\frac{1}{m(\ell+1)}n^{\ell+1/2}
+o\left(n^{\ell+1/2}\right),
\]
where \(m\geq 1\), \(0\leq r<m\), and \(\ell\geq 0\) are three integers.
This removes the additional hypothesis in a previous residue class
asymptotic formula by the author. The proof uses the Fourier uniformity of
extremal Sidon sets due to Ortega and Prendiville. 
\end{abstract}

\maketitle

\section{Introduction}

A set \(S\subset \Z\) is called a Sidon set if all sums \(a+b\), with
\((a,b)\in S^2\) considered as unordered pairs, are distinct. Let \(S_n\)
be the largest cardinality of a Sidon subset of $\{1,2,\ldots, n\}$.
 The classical upper bound of Erdős and Turán~\cite{ErdosTuran}, together
with Singer's construction~\cite{Singer} and prime gap estimates of
Baker, Harman and Pintz~\cite{BHP}, gives the near square root
size estimate
\begin{equation}\label{eq:maximal-sidon-size}
S_n=n^{1/2}+O\left(n^{21/80}\right).
\end{equation}

Uniform distribution questions for dense Sidon sets have a long history.
Lindstr\"om~\cite{Lindstrom} proved well distribution in residue classes,
and Kolountzakis~\cite{Kolountzakis} obtained quantitative refinements
for the result of Lindstr\"om. Cilleruelo's work on gaps
in dense Sidon sets~\cite{Cilleruelo} is another important input in this
circle of ideas. In particular, it is used by Ortega and Prendiville in
the improved Fourier uniformity estimate recalled below.

In~\cite{Ding2021}, the author proved that if \(S\subset[1,n]\) is a
Sidon set with \(|S|=S_n\), then for every \(\varepsilon>0\),
\[
\sum_{a\in S}a
=\frac{1}{2}n^{3/2}
+O\left(n^{111/80+\varepsilon}\right).
\]
The same paper also considered sums in residue classes. For fixed
\(m\geq 1\) and \(0\leq r<m\), it was shown conditionally that
\[
\sum_{\substack{a\in S\\ a\equiv r\cmod{m}}}a
\sim \frac{1}{2m}n^{3/2}.
\]
The condition imposed there was a lower bound for
\(|S\cap[1,k]|\) throughout a long terminal range of $k$. As observed by
Balasubramanian and Dutta~\cite{BalasubramanianDutta}, this condition is
rarely compatible with the expected distribution of a dense Sidon set.
The final remark of~\cite{Ding2021} explicitly asks whether the asymptotic remains true in residue class situation after removing this additional
condition. One purpose of the present note is to answer this question
unconditionally, in fact for all fixed power sums.

The author's subsequent paper~\cite{Ding2023} developed these weighted
summation questions further. It proved, for
\[
S=\{a_1<a_2<\cdots<a_t\}\subset[1,n],
\]
that for every positive integer \(\ell\),
\[
\sum_{i=1}^t a_i^\ell
=\frac{t}{\ell+1}n^\ell
+O\left(n^{\ell+3/8}\right)
\]
whenever
\(
t\geq n^{1/2}-n^{1/4}.
\)
Moreover, for \(|S|=S_n\) the author proved
\[
\sum_{a\in S}a
=\frac{1}{2}n^{3/2}
+O\left(n^{221/160}\right),
\]
which slightly improves the error term in~\cite{Ding2021}. The same
paper also proved the almost all estimate
\[
\sum_{a\in S}a
=\frac{1}{2}n^{3/2}
+O\left(n^{11/8}\log n\right)
\]
for all \(n\leq N\), with at most
\[
O\left(\frac{N}{(\log N)^{7/19}}\right)
\]
exceptions, and established asymptotic formulae for more general
weighted sums
\[
\sum_{i=1}^t i^s a_i^\ell.
\]
One of the results below gives a
residue class analogue of these weighted sums.

Balasubramanian and Dutta~\cite{BalasubramanianDutta} later introduced
a direct formula for the \(m\)-th element of a dense finite Sidon set.
As consequences of their formula, they recovered some results of
\cite{Ding2021,Ding2023} by a different argument and improved the
almost all error term from
\cite{Ding2023}, using a prime gap estimate of
Heath-Brown~\cite{HeathBrown}. Precisely, they proved for every \(\varepsilon>0\),
\[
\sum_{a\in S}a
=\frac{1}{2}n^{3/2}
+O\left(n^{11/8}\right)
\]
for all \(n\leq N\), with at most
\[
O\left(N^{4/5+\varepsilon}\right)
\]
exceptions. Their observation about the conditional residue class theorem
in~\cite{Ding2021} is one motivation for the present note. We use the
same prime gap device below to sharpen the residue class error term for
almost all values of \(n\).

We revisit the residue class problem using the 
Fourier uniformity of extremal Sidon sets, due to Ortega and
Prendiville~\cite{OrtegaPrendiville}. This gives a short unconditional
proof of the residue class weighted sum asymptotic formula. In fact, the same
argument gives a more general statement on a dense Sidon set has
the expected density on every finite arithmetic progression.

For a Sidon set
\(S\subset[1,n]\), put
\begin{equation}\label{eq:def-phi}
\Phi(S,n)=
n^{1/2}
\left(
\left|1-\frac{|S|}{n^{1/2}}\right|
+n^{-1/4}
\right)^{1/2}.
\end{equation}
The new results are the following asymptotic formulae. 

\begin{theorem}\label{thm:main}
Let \(\ell\geq 0\) be fixed, and let \(S\subset[1,n]\) be a Sidon set.
Then, for every finite arithmetic progression \(P\subset[1,n]\),
\begin{equation}\label{eq:main-asymptotic}
\sum_{a\in S\cap P} a^\ell
=\frac{|S|}{n}\sum_{a\in P}a^\ell
+O\left(n^\ell\Phi(S,n)\log(2n)\right).
\end{equation}
The implied constant depends only on \(\ell\).
\end{theorem}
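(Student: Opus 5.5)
The plan is to use Fourier analysis on $\Z/N\Z$ (or on $\T$ directly), with the Ortega--Prendiville Fourier uniformity estimate as the one substantial input. In the form I expect to use it, that estimate says: for a Sidon set $S\subset[1,n]$,
\[
\sup_{\theta\in\T}\Bigl|\sum_{a\in S}\e(a\theta)-\frac{|S|}{n}\sum_{1\le x\le n}\e(x\theta)\Bigr|\ll \Phi(S,n).
\]
This matches exactly the quantity $\Phi(S,n)$ defined in \eqref{eq:def-phi}. Equivalently, the balanced function $f=1_S-\frac{|S|}{n}1_{[1,n]}$ satisfies $\|\widehat f\|_\infty\ll\Phi(S,n)$. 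Once this is available, the theorem reduces to bounding $\sum_x f(x)w(x)$, where $w(x)=x^\ell 1_P(x)$, by $\|\widehat f\|_\infty$ times an $L^1$-type norm of $\widehat w$.

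\textbf{Step 1 (the case $\ell=0$).} Let $P=\{b+jq:0\le j<L\}\subset[1,n]$. By Parseval,
\[
\sum_x f(x)1_P(x)=\int_\T \widehat f(\theta)\overline{\widehat{1_P}(\theta)}\,d\theta .
\]
The transform $\widehat{1_P}(\theta)$ is a geometric sum in $\e(q\theta)$, so
\[
\int_\T|\widehat{1_P}|\,d\theta=\int_\T\Bigl|\sum_{j<L}\e(jq\theta)\Bigr|\,d\theta=\int_\T\Bigl|\sum_{j<L}\e(j\phi)\Bigr|\,d\phi\ll\log(2L).
\]
This is the Dirichlet kernel $L^1$ bound; the substitution $\phi=q\theta$ preserves the measure on $\T$ because $q\theta$ covers $\T$ exactly $q$ times. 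Hence
\[
\sum_{a\in S\cap P}1-\frac{|S|}{n}|P|\ll\Phi(S,n)\log(2n).
\]

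\textbf{Step 2 (general $\ell$ by partial summation).} Write $g(j)=f(b+jq)$ and $G(J)=\sum_{j<J}g(j)$. Applying Step 1 to the initial segments $\{b+jq:0\le j<J\}$, which are themselves progressions in $[1,n]$, gives $|G(J)|\ll\Phi\log(2n)$ uniformly in $J\le L$. Abel summation then yields
\[
\sum_{j<L}g(j)(b+jq)^\ell=G(L)(b+(L-1)q)^\ell-\sum_{J<L}G(J)\bigl((b+Jq)^\ell-(b+(J-1)q)^\ell\bigr).
\]
The weights $(b+jq)^\ell$ are increasing, so the differences telescope and their total is at most $n^\ell$. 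The whole expression is therefore $\ll n^\ell\Phi(S,n)\log(2n)$. Since $\sum_x f(x)w(x)$ is exactly the left side of \eqref{eq:main-asymptotic} minus the main term, this proves the theorem, with implied constant depending only on $\ell$ (in fact absolute after the factor $n^\ell$).

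\textbf{Main obstacle.} The key difficulty is pinning down the precise form of the Ortega--Prendiville estimate. It may be stated on $\Z/N\Z$ with the comparison function $\frac{|S|}{n}1_{[1,n]}$, or only for $|S|$ close to $n^{1/2}$, or with a slightly different error. If it is stated on a cyclic group, I would embed $[1,n]$ into $\Z/N\Z$ with $N\ge 2n$ to avoid wraparound. In that setting the Fourier $L^1$ bound for an arithmetic progression in $\Z/N\Z$ is still $\ll\log N$, proved by the same Dirichlet-kernel argument after noting that $\xi\mapsto q\xi$ maps the dual group onto a subgroup. If the estimate carries the extra $n^{-1/4}$ term, that term is already built into $\Phi$. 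When $\Phi\ge n^{1/2}$ the bound is trivial, so nothing further is needed in that range.
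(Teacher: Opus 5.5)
Your proof is correct and follows essentially the same route as the paper: the Ortega--Prendiville bound $\|\widehat{1_S}-\frac{|S|}{n}\widehat{1_{[1,n]}}\|_\infty\ll\Phi(S,n)$ is combined with $\int_\T|\widehat{1_P}|\ll\log(2n)$ to get the discrepancy estimate on progressions, and Abel summation then handles $\ell\geq 1$. The differences are cosmetic: you prove the $L^1$ bound directly via the Dirichlet kernel (the paper cites Ortega--Prendiville's Lemma~3.2), and you sum by parts along the progression index rather than over $k\in[1,n]$ with $P\cap[1,k]$.
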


\begin{corollary}\label{cor:local-residue}
Let \(m\geq 1\), \(0\leq r<m\), and \(\ell\geq 0\) be fixed integers.
Let \(S\subset[1,n]\) be a Sidon set. Then, uniformly for all intervals
of integers \(I\subset[1,n]\),
\begin{equation}\label{eq:local-residue}
\sum_{\substack{a\in S\cap I\\ a\equiv r\cmod{m}}} a^\ell
=
\frac{|S|}{n}
\sum_{\substack{a\in I\\ a\equiv r\cmod{m}}} a^\ell
+O\left(n^\ell\Phi(S,n)\log(2n)\right).
\end{equation}
The implied constant depends only on \(\ell\) and \(m\).
\end{corollary}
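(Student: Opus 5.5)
The plan is to deduce \corref{cor:local-residue} directly from \thmref{thm:main}, observing that the set being summed over is itself a finite arithmetic progression. Let \(I=[u,v]\cap\Z\subset[1,n]\) be an interval of integers, and put
\[
P=\{a\in I:\ a\equiv r\cmod{m}\}.
\]
If \(P\) is empty, both sums in \eqref{eq:local-residue} vanish and there is nothing to prove. Otherwise, letting \(a_0\) be the least element of \(I\) with \(a_0\equiv r\cmod m\), we have \(P=\{a_0,a_0+m,\ldots,a_0+km\}\) for some \(k\geq 0\). This is a finite arithmetic progression of common difference \(m\) contained in \([1,n]\). (If \(P\) is a single point, it is a trivial progression; if one prefers to avoid degenerate cases, the single term contributes at most \(n^\ell\) to each side, which is absorbed by the error since \(\Phi(S,n)\geq n^{1/4}\geq 1\).)

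Next we note that
\[
\sum_{\substack{a\in S\cap I\\ a\equiv r\cmod{m}}}a^\ell=\sum_{a\in S\cap P}a^\ell
\qquad\text{and}\qquad
\sum_{\substack{a\in I\\ a\equiv r\cmod{m}}}a^\ell=\sum_{a\in P}a^\ell .
\]
Applying \eqref{eq:main-asymptotic} to this \(P\) then gives \eqref{eq:local-residue} with error \(O(n^\ell\Phi(S,n)\log(2n))\).

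The constant in \thmref{thm:main} depends only on \(\ell\) and is uniform over all progressions \(P\subset[1,n]\). The bound is therefore uniform in \(I\), and in fact independent of \(m\) and \(r\), which is stronger than the stated dependence on \(\ell\) and \(m\).

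There is essentially no obstacle here. The only point needing care is that \thmref{thm:main} is stated for arbitrary finite arithmetic progressions, with no restriction on the common difference or on the starting point. All the analytic work, namely the Fourier uniformity of Ortega and Prendiville combined with partial summation and the \(\log(2n)\) coming from the \(L^1\) norm of the progression's Fourier transform, has already been carried out in the proof of the theorem.
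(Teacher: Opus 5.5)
Your proposal is correct and matches the paper's proof: the paper also notes that \(\{a\in I: a\equiv r\cmod{m}\}\) is a finite arithmetic progression in \([1,n]\) and applies \thmref{thm:main} to it. Your extra remarks are accurate but not needed, namely the treatment of the empty and one-point cases and the observation that the implied constant actually depends only on \(\ell\).
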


For the maximal cases below, we recall the standard estimate
\[
S_n=n^{1/2}+O\left(n^{21/80}\right),
\]
already stated in \eqref{eq:maximal-sidon-size}. This is the only
additional input needed to pass from the general residue class formula
to maximal Sidon sets. The next corollary contains the unconditional
maximal Sidon set form of the residue class result asked for in the
final remark of~\cite{Ding2021}.

\begin{corollary}\label{cor:residue-effective}
Let \(m\geq 1\), \(0\leq r<m\), and \(\ell\geq 0\) be fixed integers.
Let \(S\subset[1,n]\) be a Sidon set. Then
\begin{equation}\label{eq:residue-effective}
\sum_{\substack{a\in S\\ a\equiv r\cmod{m}}} a^\ell
=\frac{|S|}{m(\ell+1)}n^\ell
+O\left(n^\ell\Phi(S,n)\log(2n)\right).
\end{equation}
The implied constant depends only on \(\ell\) and \(m\).
In particular, if \(|S|=S_n\), then
\[
\sum_{\substack{a\in S\\ a\equiv r\cmod{m}}} a
=\frac{1}{2m}n^{3/2}
+O\left(n^{221/160}\log (2n)\right),
\]
where the implied constant depends only on \(m\).
\end{corollary}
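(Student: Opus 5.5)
We will deduce \corref{cor:residue-effective} from \corref{cor:local-residue} applied with the full interval \(I=[1,n]\cap\Z\). This gives
\[
\sum_{\substack{a\in S\\ a\equiv r\cmod{m}}} a^\ell
=\frac{|S|}{n}\sum_{\substack{1\le a\le n\\ a\equiv r\cmod{m}}} a^\ell
+O\left(n^\ell\Phi(S,n)\log(2n)\right).
\]
It then remains to evaluate the inner sum over the progression. We write \(a=r'+mk\), where \(r'\in[1,m]\) is the representative of \(r\) (so \(r'=m\) when \(r=0\)). Comparing the sum with the integral of \(x^\ell\), for instance via Euler--Maclaurin or the monotonicity of \(x^\ell\), gives
\[
\sum_{\substack{1\le a\le n\\ a\equiv r\cmod{m}}} a^\ell=\frac{n^{\ell+1}}{m(\ell+1)}+O_{\ell,m}(n^\ell).
\]
Multiplying by \(|S|/n\le 1\) turns the error into \(O(n^{\ell-1}|S|)\). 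Since \(S\) is Sidon, \(|S|\le n^{1/2}+O(n^{1/4})\), so this error is \(O(n^{\ell-1/2})\).

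We must absorb \(O(n^{\ell-1/2})\) into \(O(n^\ell\Phi(S,n)\log 2n)\). This holds because \(\Phi(S,n)\ge n^{1/2}\cdot n^{-1/8}=n^{3/8}\), which gives the first assertion \eqref{eq:residue-effective}.

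For the maximal case \(\ell=1\), \(|S|=S_n\), we insert \eqref{eq:maximal-sidon-size}, namely \(|S|=n^{1/2}+O(n^{21/80})\). This gives \(|1-|S|n^{-1/2}|=O(n^{21/80-40/80})=O(n^{-19/80})\). The term \(n^{-1/4}=n^{-20/80}\) is smaller, so \(\Phi(S,n)=O(n^{1/2-19/160})=O(n^{61/160})\). The error term is therefore \(O(n^{1+61/160}\log 2n)=O(n^{221/160}\log 2n)\). The main term \(\frac{|S|}{2m}n\) equals \(\frac{1}{2m}n^{3/2}+O(n^{1+21/80})\), and this error is again absorbed, since \(1+21/80=1+42/160<221/160\).

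None of the steps is a genuine obstacle once \corref{cor:local-residue} is available. The only points needing care are the boundary terms in the progression sum and checking that each lower-order error is dominated by \(n^\ell\Phi\), which follows from the universal lower bound \(\Phi\ge n^{3/8}\).
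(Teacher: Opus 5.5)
Your proposal is correct and follows the paper's argument. You apply \corref{cor:local-residue} with $I=[1,n]$, which is the same as applying \thmref{thm:main} to $P=\{a\in[1,n]: a\equiv r\cmod{m}\}$. You evaluate the power sum as $\frac{n^{\ell+1}}{m(\ell+1)}+O(n^\ell)$, absorb the resulting $O(|S|n^{\ell-1})$ term via $\Phi(S,n)\ge n^{3/8}$, and insert $S_n=n^{1/2}+O(n^{21/80})$ to get $\Phi(S,n)\ll n^{61/160}$. The only difference is that you invoke the Erd\H{o}s--Tur\'an bound on $|S|$, where the paper uses the trivial bound $|S|\le n$, which already suffices for the absorption.
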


In particular, if \(|S|\sim n^{1/2}\) and
\(\Phi(S,n)\log(2n)=o(n^{1/2})\), then the expected asymptotic formula
in each fixed residue class follows. Note that \(\Phi(S,n)\log(2n)=o(n^{1/2})\) indeed happens if $|S|=n^{1/2}+O(n^{1/2-\delta})$ for some $\delta>0$.
Under the hypothesis
\(|S|\geq n^{1/2}-n^{1/4}\), the same corollary gives the error term
\(O\big(n^{\ell+3/8}\log (2n)\big)\), with the implied constant depending only on
\(\ell\) and \(m\).

The next theorem is a weighted generalization of \thmref{thm:main},
giving parallel estimates after weighting
\(a_i\in S\) by \(i^s a_i^\ell\).

\begin{theorem}\label{thm:rank-weighted-local}
Let \(m\geq 1\), \(0\leq r<m\), and \(s,\ell\geq 0\) be fixed
integers. Let
\[
S=\{a_1<a_2<\cdots<a_t\}\subset[1,n]
\]
be a Sidon set. Then, uniformly for all intervals of integers
\(I\subset[1,n]\),
\begin{equation}\label{eq:rank-local}
\sum_{\substack{1\leq i\leq t\\ a_i\in I\\ a_i\equiv r\cmod{m}}}
i^s a_i^\ell
=
\frac{t^{s+1}}{n^{s+1}}
\sum_{\substack{a\in I\\ a\equiv r\cmod{m}}}a^{s+\ell}
+O\left(t^s n^\ell\Phi(S,n)\log(2n)\right).
\end{equation}
The implied constant depends only on \(s\), \(\ell\), and \(m\).
In particular,
\begin{equation}\label{eq:rank-residue}
\sum_{\substack{1\leq i\leq t\\ a_i\equiv r\cmod{m}}}
i^s a_i^\ell
=
\frac{t^{s+1}}{m(s+\ell+1)}n^\ell
+O\left(t^s n^\ell\Phi(S,n)\log(2n)\right).
\end{equation}
\end{theorem}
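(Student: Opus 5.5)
We reduce the rank-weighted sum to the unweighted estimates of Corollary~\ref{cor:local-residue}, applied to initial intervals, and then integrate by parts. Write $A(x)=|S\cap[1,x]|$ for real $x\ge 0$, so that $i=A(a_i)$ for $a_i\in S$. The case $I=[1,x]$ and $\ell=0$, $m=1$ of Corollary~\ref{cor:local-residue} gives
\[
A(x)=\frac{t}{n}\lfloor x\rfloor+O\bigl(\Phi(S,n)\log(2n)\bigr),
\]
uniformly in $0\le x\le n$. Since $i\le t\le n^{1/2}+O(\Phi)$, this also gives
\[
i^s=\Bigl(\frac{t}{n}a_i\Bigr)^s+O\Bigl(t^{s-1}\Phi(S,n)\log(2n)\Bigr)
\]
by the mean value theorem. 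The factor $t^{s-1}$ is justified because both $i$ and $ta_i/n$ are $O(t+\Phi\log n)$. If $\Phi\log(2n)\gg t$, the whole statement is trivial, since the left side is at most $t^{s+1}n^\ell\ll t^sn^\ell\Phi\log(2n)$ and the main term is of the same size. So we may assume $\Phi\log(2n)\le t$.

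\textbf{Step 1 (replace $i^s$ by $(ta_i/n)^s$).} Substituting, we get
\[
\sum_{\substack{a_i\in I\\ a_i\equiv r\,(m)}} i^s a_i^\ell
=\frac{t^s}{n^s}\sum_{\substack{a\in S\cap I\\ a\equiv r\,(m)}} a^{s+\ell}
+O\Bigl(t^{s-1}\Phi\log(2n)\cdot n^\ell\cdot \#\{a\in S\cap I\}\Bigr).
\]
Since $\#(S\cap I)\le t$, the error is $O(t^s n^\ell\Phi\log(2n))$. This crude bound suffices, so no partial summation is actually needed. The main step is now to apply Corollary~\ref{cor:local-residue} with exponent $s+\ell$ in place of $\ell$:
\[
\frac{t^s}{n^s}\sum_{\substack{a\in S\cap I\\ a\equiv r\,(m)}} a^{s+\ell}
=\frac{t^{s+1}}{n^{s+1}}\sum_{\substack{a\in I\\ a\equiv r\,(m)}}a^{s+\ell}
+O\Bigl(\frac{t^s}{n^s}n^{s+\ell}\Phi\log(2n)\Bigr).
\]
Its error is exactly $O(t^sn^\ell\Phi\log(2n))$, which proves \eqref{eq:rank-local}.

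\textbf{Step 2 (the residue formula).} Take $I=[1,n]$ and compute
\[
\sum_{a\le n,\ a\equiv r\,(m)}a^{s+\ell}=\frac{n^{s+\ell+1}}{m(s+\ell+1)}+O(n^{s+\ell}),
\]
using Euler--Maclaurin or comparison with an integral. This gives the main term $\frac{t^{s+1}}{m(s+\ell+1)}n^\ell$ with error $O(t^{s+1}n^{\ell-1})$. That error is $O(t^sn^\ell)$ and hence is absorbed, since $\Phi\ge n^{1/4}\ge 1$. This yields \eqref{eq:rank-residue}.

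\textbf{Main obstacle.} The delicate point is the replacement in Step~1. It needs the uniform counting estimate for $A(x)$ at every $x$, not just at $x=n$, together with control of the mean value theorem when $s\ge 2$. This is where we use that $A(x)$ and $tx/n$ are both $O(t)$ in the nontrivial regime $\Phi\log(2n)\le t$. The other steps are direct applications of Corollary~\ref{cor:local-residue} and elementary power-sum asymptotics. The implied constants depend only on $s,\ell,m$ through those in Corollary~\ref{cor:local-residue}, applied with exponents $0$ and $s+\ell$.
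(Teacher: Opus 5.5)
Your proposal is correct and follows the paper's proof essentially step for step: the discrepancy estimate on the initial intervals $[1,a_i]$ gives $i=ta_i/n+O(\Phi(S,n)\log(2n))$, and the mean value (factorization) bound turns this into $i^s=(ta_i/n)^s+O(t^{s-1}\Phi(S,n)\log(2n))$. Then \corref{cor:local-residue} with exponent $s+\ell$, together with the elementary power-sum asymptotic, gives \eqref{eq:rank-local} and \eqref{eq:rank-residue}; your case split on whether $\Phi(S,n)\log(2n)\le t$ is harmless but unnecessary, since $0\le i\le t$ and $0\le ta_i/n\le t$ hold trivially, and this is all the paper uses to get the factor $t^{s-1}$.
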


\begin{corollary}\label{cor:maximal}
Let \(m\geq 1\), \(0\leq r<m\), and \(s,\ell\geq 0\) be fixed integers.
Let
\[
S=\{a_1<a_2<\cdots<a_t\}\subset[1,n]
\]
be a Sidon set with \(t=S_n\). Then
\begin{equation}\label{eq:maximal-rank-residue}
\sum_{\substack{1\leq i\leq t\\ a_i\equiv r\cmod{m}}}
i^s a_i^\ell
=\frac{1}{m(s+\ell+1)}n^{\ell+(s+1)/2}
+O\left(n^{\ell+s/2+61/160}\log (2n)\right).
\end{equation}
The implied constant depends only on \(s\), \(\ell\), and \(m\).
\end{corollary}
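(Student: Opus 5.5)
The plan is to specialize \thmref{thm:rank-weighted-local}, in the form \eqref{eq:rank-residue}, to $t=S_n$, and to use \eqref{eq:maximal-sidon-size} to make both the main term and $\Phi(S,n)$ explicit. No new Sidon-set input is needed.

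First, bound $\Phi(S,n)$. Since $t=S_n=n^{1/2}+O(n^{21/80})$, we have
\[
\left|1-\frac{t}{n^{1/2}}\right|\ll n^{21/80-1/2}=n^{-19/80}.
\]
Because $19/80<1/4$, this dominates the $n^{-1/4}$ term in \eqref{eq:def-phi}. Hence
\[
\Phi(S,n)\ll n^{1/2}\cdot n^{-19/160}=n^{61/160}.
\]
Also $t\ll n^{1/2}$, so $t^s\ll n^{s/2}$. The error term in \eqref{eq:rank-residue} is therefore
\[
O\bigl(n^{\ell+s/2+61/160}\log(2n)\bigr),
\]
with the implied constant depending only on $s,\ell,m$.

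Second, expand the main term. By the binomial theorem,
\[
t^{s+1}=n^{(s+1)/2}\bigl(1+O(n^{-19/80})\bigr)^{s+1}=n^{(s+1)/2}+O\bigl(n^{s/2+21/80}\bigr).
\]
This gives
\[
\frac{t^{s+1}}{m(s+\ell+1)}n^\ell=\frac{1}{m(s+\ell+1)}n^{\ell+(s+1)/2}+O\bigl(n^{\ell+s/2+21/80}\bigr).
\]
Since $21/80=42/160<61/160$, this extra error is absorbed into the one from the first step. Combining the two steps yields \eqref{eq:maximal-rank-residue}.

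The only point needing care is the bookkeeping of exponents. One must check that the $|1-t/n^{1/2}|$ contribution, not $n^{-1/4}$, dominates inside $\Phi$, and that the binomial error is of lower order. Both follow from $21/80<1/2-1/4$ and $21/80<61/160$, so I expect no genuine obstacle.
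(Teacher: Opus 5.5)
Your proposal is correct and is essentially the paper's proof: bound $\Phi(S,n)\ll n^{61/160}$ via \eqref{eq:maximal-sidon-size}, insert this with $t\ll n^{1/2}$ into \eqref{eq:rank-residue}, and absorb the $O(n^{\ell+s/2+21/80})$ error from replacing $t^{s+1}$ by $n^{(s+1)/2}$. One cosmetic slip is that the inequality ``$21/80<1/2-1/4$'' in your last sentence is backwards; the body correctly uses $19/80<1/4$, i.e.\ $21/80>1/4$. It is harmless anyway, since if the $n^{-1/4}$ term dominated you would get $\Phi(S,n)\ll n^{3/8}\le n^{61/160}$.
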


The following almost all theorem follows by combining
\thmref{thm:rank-weighted-local} with the prime gap input used by
Balasubramanian and Dutta~\cite{BalasubramanianDutta}.

\begin{theorem}\label{thm:maximal-almost-all}
Let \(N\geq 2\) and \(\varepsilon>0\). For all but
\(O(N^{4/5+\varepsilon})\) integers \(n\leq N\), the following assertion
holds: for any fixed integers \(m\geq 1\), \(0\leq r<m\), and
\(s,\ell\geq 0\), if
\[
S=\{a_1<a_2<\cdots<a_t\}\subset[1,n]
\]
is a Sidon set with \(t=S_n\),
then
\begin{equation}\label{eq:almost-all-rank-residue}
\sum_{\substack{1\leq i\leq t\\ a_i\equiv r\cmod{m}}}
i^s a_i^\ell
=\frac{1}{m(s+\ell+1)}n^{\ell+(s+1)/2}
+O\left(n^{\ell+s/2+3/8}\log n\right).
\end{equation}
The implied constant in the formula depends only on \(s\), \(\ell\), and
\(m\). The exceptional set constant may also depend on \(\varepsilon\).
In particular, for \(s=0\) and \(\ell=1\),
\[
\sum_{\substack{a\in S\\ a\equiv r\cmod{m}}} a
=\frac{1}{2m}n^{3/2}
+O\left(n^{11/8}\log n\right),
\]
where the implied constant depends only on \(m\).
\end{theorem}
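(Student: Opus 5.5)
Fix $n\le N$ and a Sidon set $S=\{a_1<\cdots<a_t\}\subset[1,n]$ with $t=S_n$. The plan is to apply \eqref{eq:rank-residue} of \thmref{thm:rank-weighted-local} with this $S$. This gives
\[
\sum_{\substack{1\leq i\leq t\\ a_i\equiv r\cmod{m}}} i^s a_i^\ell
=\frac{S_n^{s+1}}{m(s+\ell+1)}n^\ell+O\bigl(S_n^s n^\ell\Phi(S,n)\log(2n)\bigr).
\]
Everything then reduces to controlling how far $S_n$ is from $n^{1/2}$, and only for non-exceptional $n$. The key input is the Balasubramanian--Dutta prime gap device: for all but $O(N^{4/5+\varepsilon})$ integers $n\le N$ one has
\[
S_n\geq n^{1/2}-O(n^{1/4}).
\]
This device rests on Heath-Brown's estimate that the number of primes $p\le x$ with gap $p'-p>p^{1/2}$ is small in a sense that controls the exceptional $n$. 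Singer's construction with $q$ a prime power supplies Sidon sets of size $q+1$ in $[1,q^2+q+1]$. So $n$ can fail only if $n$ lies in an interval $[q^2,q'^2]$ with $q'-q$ large, and the total length of such intervals is $O(N^{4/5+\varepsilon})$. Combined with the Erdős--Turán upper bound $S_n\le n^{1/2}+O(n^{1/4})$, we get $S_n=n^{1/2}+O(n^{1/4})$ off the exceptional set. This set does not depend on $m,r,s,\ell$, which matches the order of quantifiers in the statement.

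For such $n$, definition \eqref{eq:def-phi} gives
\[
\bigl|1-S_n n^{-1/2}\bigr|\ll n^{-1/4},
\]
hence $\Phi(S,n)\ll n^{1/2}\cdot n^{-1/8}=n^{3/8}$. Together with $S_n^s\ll n^{s/2}$, the error term becomes $O(n^{\ell+s/2+3/8}\log n)$. For the main term, expand
\[
S_n^{s+1}=n^{(s+1)/2}\bigl(1+O(n^{-1/4})\bigr),
\]
so replacing $S_n^{s+1}$ by $n^{(s+1)/2}$ costs $O(n^{\ell+s/2+1/4})$, which is absorbed. The case $s=0,\ell=1$ is then immediate. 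Here $\log(2n)\asymp\log n$ since $n\ge2$ can be assumed; small $n$ can be put in the exceptional set.

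The main obstacle is the lower bound on $S_n$ for almost all $n$. I would first try to quote it directly as proved in \cite{BalasubramanianDutta}. Failing that, I would rederive it from Heath-Brown's bound $\sum_{p_{k+1}-p_k>p_k^{1/2}}(p_{k+1}-p_k)\ll x^{2/5+\varepsilon}$ (or the form used there). Each bad gap at $p\asymp\sqrt N$ produces an exceptional $n$-interval of length $\ll p(p_{k+1}-p_k)$. Summing over these gaps gives $\ll N^{1/2}\cdot N^{(2/5+\varepsilon)/2}\cdot(\text{adjustment})$, which must be arranged to total $N^{4/5+\varepsilon}$. Finally, one uses monotonicity of $S_n$ in $n$ to transfer the Singer bound from $n=q^2+q+1$ to nearby $n$.
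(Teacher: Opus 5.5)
Your proposal is correct and follows essentially the same route as the paper: apply \eqref{eq:rank-residue}, then use the almost-all size estimate $S_n=n^{1/2}+O(n^{1/4})$ (Erd\H{o}s--Tur\'an upper bound, Singer lower bound transferred by monotonicity, Heath-Brown's large-gap estimate to count the exceptional $n$) to get $\Phi(S,n)\ll n^{3/8}$, and finally replace $t^{s+1}$ by $n^{(s+1)/2}$ at cost $O(n^{\ell+s/2+1/4})$. The only slip is in your rederivation of the size lemma: the Heath-Brown bound the paper uses is $\sum_{p_j\le x,\; p_{j+1}-p_j\ge p_j^{1/2}}(p_{j+1}-p_j)\ll x^{3/5+\varepsilon}$, not $x^{2/5+\varepsilon}$ (which would give the suspiciously strong count $N^{7/10+\varepsilon}$); with $x=\sqrt{N+1}$ the correct exponent yields exactly $\sqrt N\cdot N^{3/10+\varepsilon/2}\ll N^{4/5+\varepsilon}$ exceptional $n$, with no ``adjustment'' needed.
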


I hope these results would be useful for further investigations of the Sidon sets in future.
\section{Proofs}

\subsection{Fourier input}

We use the following consequences of Ortega and Prendiville's 
Fourier uniformity theorem for extremal Sidon
sets~\cite[Theorem~6.3]{OrtegaPrendiville}.
We write \(\T=\R/\Z\), identifying it with \([0,1)\) in integrals. For a
set \(A\subset\Z\), let \(1_A\) denote its indicator function. For a
function \(G:\T\to\mathbb C\), put
\(\|G\|_\infty=\sup_{\alpha\in\T}|G(\alpha)|\). For a finitely
supported function \(f:\Z\to\mathbb C\), write
\[
\widehat f(\alpha)=\sum_{x\in\Z}f(x)\e^{2\pi i\alpha x}
\qquad (\alpha\in\T).
\]

\begin{lemma}\label{lem:fourier}
Let \(S\subset[1,n]\) be a Sidon set. Then
\[
\left\|
\widehat{1_S}
-\frac{|S|}{n}\widehat{1_{[1,n]}}
\right\|_\infty
\ll \Phi(S,n).
\]
\end{lemma}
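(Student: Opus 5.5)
The plan is to obtain this bound as a direct translation of \cite[Theorem~6.3]{OrtegaPrendiville}. The only real work is in matching normalizations and disposing of the ranges of \(|S|\) where that theorem does not literally apply. Throughout write \(f=1_S-\frac{|S|}{n}1_{[1,n]}\), so the claim is \(\|\widehat f\|_\infty\ll\Phi(S,n)\).

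\emph{Step 1 (trivial ranges).} By the Erdős--Turán upper bound, any Sidon set in \([1,n]\) satisfies \(|S|\leq n^{1/2}+O(n^{1/4})\). Hence \(\|\widehat f\|_\infty\leq 2|S|\ll n^{1/2}\) always. If \(|1-|S|/n^{1/2}|\geq c\) for a fixed small constant \(c\), then \(\Phi(S,n)\geq c^{1/2}n^{1/2}\), and the trivial bound already suffices. Likewise, for \(n\) bounded everything is trivial. So we may assume \(|S|=n^{1/2}-Kn^{1/4}\), where \(K=n^{1/4}(1-|S|/n^{1/2})\) satisfies \(-O(1)\leq K\leq cn^{1/4}\). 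With this notation,
\[
\Phi(S,n)\asymp n^{1/2}\bigl((|K|+1)n^{-1/4}\bigr)^{1/2}=n^{3/8}(|K|+1)^{1/2}.
\]

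\emph{Step 2 (apply Ortega--Prendiville).} Their theorem applies to a Sidon set \(A\subset[N]\) of size \(N^{1/2}-KN^{1/4}\), in the regime \(K\ll N^{1/4}\). It bounds the sup norm of the Fourier transform of the balanced function \(1_A-\frac{|A|}{N}1_{[N]}\) by \(\ll N^{3/8}(K+1)^{1/2}\) (equivalently \(N^{1/2}((K+1)N^{-1/4})^{1/2}\)). I will check carefully the following points.
\begin{itemize}
\item Their phase convention \(\e^{\pm 2\pi i\alpha x}\) and their interval \([N]\) versus our \([1,n]\). Both are harmless: conjugation preserves sup norms, and a shift only multiplies \(\widehat f\) by a unimodular factor.
\item Whether they normalize by \(|A|/N\) or by \(N^{-1/2}\). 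If they use \(N^{-1/2}\), the difference is
\[
\Bigl|\tfrac{|S|}{n}-n^{-1/2}\Bigr|\cdot\bigl|\widehat{1_{[1,n]}}\bigr|\leq \bigl|1-|S|/n^{1/2}\bigr|\,n^{1/2}\ll |K|n^{1/4},
\]
and \(|K|n^{1/4}\leq |K|^{1/2}n^{3/8}\) precisely because \(|K|\ll n^{1/4}\). So the discrepancy is absorbed into \(\Phi\).
\item The case of negative \(K\), i.e.\ \(|S|\) slightly above \(n^{1/2}\). It is covered by replacing \(K\) with \(|K|+1\), since their argument only uses an upper bound for the additive energy and a lower bound on size.
\end{itemize}

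\emph{Main obstacle and fallback.} The main obstacle is purely bookkeeping: confirming that Theorem~6.3 is stated with the same hypothesis range and the exponent \(1/2\) on \((K+1)N^{-1/4}\). If it is not, I would reprove the needed bound along their lines.
\begin{itemize}
\item First, use Cilleruelo-type counting: each difference in \(S-S\) occurs at most once. This gives
\[
\sum_x|S\cap[x,x+h)|^2\leq |S|h+h^2,
\]
which forces the local counts \(c_x=|S\cap[x,x+h)|\) to have variance \(\ll (K+1)n^{-1/4}\cdot(\text{mean})^2\) when compared with \(\frac{|S|}{n}h\).
\item Next, express \(\widehat f(\alpha)\,\widehat{1_{[0,h)}}(-\alpha)\) as \(\sum_x(c_x-\tfrac{|S|}{n}h)\e^{2\pi i\alpha x}\) up to boundary terms, and bound it by Cauchy--Schwarz. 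This controls \(\widehat f(\alpha)\) for \(\|\alpha\|\leq 1/(2h)\).
\item For large \(\|\alpha\|\), I would apply the same argument to a dilated or modulated interval, i.e.\ to the Bohr set around \(\alpha\). This last part is where the Ortega--Prendiville input is genuinely needed, so citing their theorem is the intended route.
\end{itemize}
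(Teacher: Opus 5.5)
Your proposal is correct and takes the same route as the paper: the paper's proof simply quotes Theorem~6.3 of Ortega--Prendiville, which is already stated there with the normalization $|S|/n$ and the bound $n^{1/2}\bigl(\bigl|1-|S|/n^{1/2}\bigr|+n^{-1/4}\bigr)^{1/2}=\Phi(S,n)$. Your extra bookkeeping (the trivial range $\bigl|1-|S|/n^{1/2}\bigr|\geq c$, the shift and conjugation invariance, and the $N^{-1/2}$ versus $|S|/n$ normalization check) is sound but unnecessary, and your fallback sketch is never needed.
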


\begin{proof}
Ortega and Prendiville proved
~\cite[Theorem~6.3]{OrtegaPrendiville} that
\begin{equation}\label{eq:op-improved-fourier}
\left\|
\widehat{1_S}
-\frac{|S|}{n}\widehat{1_{[1,n]}}
\right\|_\infty
\ll
n^{1/2}
\left(
\left|1-\frac{|S|}{n^{1/2}}\right|
+n^{-1/4}
\right)^{1/2}.
\end{equation}
This is precisely the claimed bound.
\end{proof}

\begin{lemma}\label{lem:progression-l1}
Let \(P\subset[1,n]\) be a finite arithmetic progression. Then
\[
\int_{\T}\big|\widehat{1_P}(\alpha)\big|\,{\rm d}\alpha\ll \log(2n).
\]
\end{lemma}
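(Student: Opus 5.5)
We must prove that the $L^1$ norm of the Fourier transform of an arithmetic progression $P\subset[1,n]$ is $\ll \log(2n)$. Write $P=\{b+jd:0\le j<k\}$ with $d\ge 1$, $k\ge 1$ and $b\ge 1$. Since $P\subset[1,n]$, we have $(k-1)d\le n$, so $k\le n+1$. If $k=1$ the integrand is identically $1$ and there is nothing to prove, so assume $k\ge 2$.

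The plan is to reduce to the classical Dirichlet kernel estimate by a change of variables. We have
\[
\widehat{1_P}(\alpha)=\e^{2\pi i\alpha b}\sum_{j=0}^{k-1}\e^{2\pi i (d\alpha) j},
\]
so $|\widehat{1_P}(\alpha)|=|D_k(d\alpha)|$, where $D_k(\beta)=\sum_{j=0}^{k-1}\e^{2\pi i\beta j}$.

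The key observation is that for an integrable $1$-periodic function $g$ and a positive integer $d$,
\[
\int_{\T}g(d\alpha)\,{\rm d}\alpha=\int_{\T}g(\beta)\,{\rm d}\beta .
\]
To see this, split $[0,1)$ into the $d$ intervals $[u/d,(u+1)/d)$. On each interval the substitution $\beta=d\alpha-u$ contributes $\frac1d\int_0^1 g$, and summing over the $d$ intervals gives the identity. Hence
\[
\int_{\T}|\widehat{1_P}(\alpha)|\,{\rm d}\alpha=\int_{\T}|D_k(\beta)|\,{\rm d}\beta ,
\]
and the common difference has disappeared entirely.

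It remains to bound the Dirichlet kernel in $L^1$. Identify $\T$ with $[-1/2,1/2)$. We use two bounds:
\[
|D_k(\beta)|\le k
\qquad\text{and}\qquad
|D_k(\beta)|=\frac{|\sin(\pi k\beta)|}{|\sin(\pi\beta)|}\le \frac{1}{2|\beta|},
\]
the second because $|\sin\pi\beta|\ge 2|\beta|$ on this range. Apply the first bound for $|\beta|\le 1/k$ and the second for $1/k<|\beta|\le 1/2$. This gives
\[
\int_{\T}|D_k(\beta)|\,{\rm d}\beta\le 2+\int_{1/k}^{1/2}\frac{{\rm d}\beta}{\beta}\ll 1+\log k .
\]
Since $k\le n+1$, this is $\ll\log(2n)$.

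No step is a serious obstacle. The only point needing care is the invariance of the integral under $\alpha\mapsto d\alpha$, and this is what makes the bound uniform in the common difference $d$.
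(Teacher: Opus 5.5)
Your proof is correct. It differs from the paper mainly in being self-contained. The paper does not carry out this computation. After disposing of the empty and one-point cases, it invokes Ortega--Prendiville's Lemma~3.2, an $L^1$ estimate for the Fourier transform of a progression in terms of its length, together with $|P|\le n$.

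You derive the bound directly in two steps. The dilation identity $\int_{\T} g(d\alpha)\,{\rm d}\alpha=\int_{\T} g(\beta)\,{\rm d}\beta$ removes both the common difference and the starting point. The usual split of the Dirichlet kernel at $|\beta|=1/k$ then gives $\int_{\T}|D_k|\le 2+\log(k/2)$. This is the standard argument for estimates of this kind.

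What your version buys is transparency. It shows exactly why the bound is uniform in $d$ and $b$. It also yields the slightly sharper $\ll 1+\log|P|$, which depends only on the length of $P$ and not on $n$. The paper's version buys brevity at the cost of relying on an external reference.

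Two minor points do not affect the argument. Since $b\ge 1$, your bound $k\le n+1$ can be tightened to $k\le n$. The empty progression, which your parametrization with $k\ge 1$ omits, is trivial.
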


\begin{proof}
If \(P\) is empty or consists of one point, the estimate is immediate.
Otherwise, the result follows from the standard progression estimate of
Ortega and Prendiville~\cite[Lemma~3.2]{OrtegaPrendiville}, since
\(|P|\leq n\).
\end{proof}

\begin{lemma}\label{lem:progression-discrepancy}
Let \(S\subset[1,n]\) be a Sidon set. Then, uniformly for every finite
arithmetic progression \(P\subset[1,n]\),
\[
|S\cap P|=\frac{|S|}{n}|P|+O\left(\Phi(S,n)\log(2n)\right).
\]
The implied constant is absolute.
\end{lemma}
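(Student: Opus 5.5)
The plan is to write the count $|S\cap P|$ as an inner product on $\T$ via Parseval and then use the two lemmas just established. For finitely supported $f,g$ we have
\[
\sum_{x\in\Z} f(x)\overline{g(x)}=\int_{\T}\widehat f(\alpha)\overline{\widehat g(\alpha)}\,{\rm d}\alpha .
\]
Applying this with $f=1_S$, $g=1_P$, and also with $f=1_{[1,n]}$, $g=1_P$, gives
\[
|S\cap P|-\frac{|S|}{n}|P|
=\int_{\T}\Big(\widehat{1_S}(\alpha)-\frac{|S|}{n}\widehat{1_{[1,n]}}(\alpha)\Big)\overline{\widehat{1_P}(\alpha)}\,{\rm d}\alpha .
\]
Here we used that $P\subset[1,n]$, so that $\sum_x 1_{[1,n]}(x)1_P(x)=|P|$.

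We then bound the integral trivially by the sup norm of the first factor times the $L^1$ norm of the second:
\[
\Big|\,|S\cap P|-\frac{|S|}{n}|P|\,\Big|\le \Big\|\widehat{1_S}-\frac{|S|}{n}\widehat{1_{[1,n]}}\Big\|_\infty\int_{\T}\big|\widehat{1_P}(\alpha)\big|\,{\rm d}\alpha .
\]
By \lemref{lem:fourier} the sup norm is $\ll \Phi(S,n)$, and by \lemref{lem:progression-l1} the integral is $\ll\log(2n)$. This gives the claimed bound $O(\Phi(S,n)\log(2n))$. Both lemmas have absolute implied constants, and neither bound depends on the common difference or on the length of $P$, so the estimate is uniform over all progressions $P\subset[1,n]$.

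There is no real obstacle beyond these inputs. The only points to check are the orthogonality relation $\int_{\T}\e^{2\pi i\alpha(x-y)}\,{\rm d}\alpha=1_{x=y}$, which justifies the Parseval identity, and the convention for the conjugate in the Fourier transform, which does not affect absolute values. The degenerate cases where $P$ is empty or a single point are already covered, since \lemref{lem:progression-l1} holds for them as well.
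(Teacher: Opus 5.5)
Your proof is correct and is essentially the paper's argument. The paper sets $h=1_S-\frac{|S|}{n}1_{[1,n]}$ and writes the discrepancy as $\int_{\T}\widehat{1_P}(\alpha)\widehat h(-\alpha)\,{\rm d}\alpha$. This matches your conjugated Parseval form, since $\overline{\widehat{1_P}(\alpha)}=\widehat{1_P}(-\alpha)$. The paper then bounds this integral by the sup norm from \lemref{lem:fourier} times the $L^1$ bound from \lemref{lem:progression-l1}, exactly as you do.
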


\begin{proof}
Put
\[
h=1_S-\frac{|S|}{n}1_{[1,n]}.
\]
Then
\[
|S\cap P|-\frac{|S|}{n}|P|
=
\sum_{x\in P}h(x)
=
\sum_{x\in\Z}1_P(x)h(x).
\]
For finitely supported functions on \(\Z\), Fourier orthogonality gives
\[
\sum_{x\in\Z}1_P(x)h(x)
=
\int_{\T}\widehat{1_P}(\alpha)\widehat h(-\alpha)\,{\rm d}\alpha.
\]
Since
\(\widehat h=\widehat{1_S}-\frac{|S|}{n}\widehat{1_{[1,n]}}\), we
obtain
\begin{align*}
\sum_{x\in\Z}1_P(x)h(x)
&=
\int_{\T}\widehat{1_P}(\alpha)
\left(
\widehat{1_S}
-\frac{|S|}{n}\widehat{1_{[1,n]}}
\right)(-\alpha)\,{\rm d}\alpha\\
&\le \left\|
\widehat{1_S}
-\frac{|S|}{n}\widehat{1_{[1,n]}}
\right\|_\infty\int_{\T}\big|\widehat{1_P}(\alpha)\big|{\rm d}\alpha.
\end{align*}
Combining \lemref{lem:progression-l1} with \lemref{lem:fourier} gives
\[
|S\cap P|-\frac{|S|}{n}|P|
\ll \Phi(S,n)\log(2n).
\]
The bound is uniform in \(P\), as required.
\end{proof}

\subsection{Proofs of the power sum estimates}
\begin{proof}[Proof of \thmref{thm:main}]
Let
\[
E=\Phi(S,n)\log(2n).
\]
Let \(P\subset[1,n]\) be a finite arithmetic progression. For
\(1\leq k\leq n\), put
\[
P(k)=P\cap[1,k],\qquad A_P(k)=|S\cap P(k)|,\qquad R_P(k)=|P(k)|.
\]
Since \(P(k)\) is again a finite arithmetic progression,
\lemref{lem:progression-discrepancy} gives, uniformly in \(k\),
\[
A_P(k)=\frac{|S|}{n}R_P(k)+O(E).
\]
For \(\ell\geq 1\), the Abel summation gives
\begin{align*}
\sum_{a\in S\cap P}a^\ell&=\sum_{k=1}^{n}\big(A_P(k)-A_P(k-1)\big)k^\ell\\
&=n^\ell A_P(n)
-\sum_{k=1}^{n-1}A_P(k)\left((k+1)^\ell-k^\ell\right).
\end{align*}
Substituting the estimate for \(A_P(k)\) into the summation, we get
\begin{align}\label{eq-new-1}
\sum_{a\in S\cap P}a^\ell=
\frac{|S|}{n}
\left(
n^\ell R_P(n)
-\sum_{k=1}^{n-1}R_P(k)\left((k+1)^\ell-k^\ell\right)
\right)
+O(n^\ell E).
\end{align}
Using again Abel's summation we get
\begin{align}\label{eq-new-2}
n^\ell R_P(n)
-\sum_{k=1}^{n-1}R_P(k)\left((k+1)^\ell-k^\ell\right)=\sum_{a\in P}a^\ell.
\end{align}
Combining (\ref{eq-new-1}) with (\ref{eq-new-2}), we obtain
\begin{align*}
\sum_{a\in S\cap P}a^\ell=
\frac{|S|}{n}
\sum_{a\in P}a^\ell
+O(n^\ell E).
\end{align*}
This proves \eqref{eq:main-asymptotic} for \(\ell\geq 1\), with the
implied constant depending only on \(\ell\). The case \(\ell=0\) is
exactly \lemref{lem:progression-discrepancy}.
\end{proof}

\begin{proof}[Proof of \corref{cor:local-residue}]
For an interval \(I\subset[1,n]\), the set
\[
\{a\in I:a\equiv r\cmod{m}\}
\]
is a finite arithmetic progression. Applying \thmref{thm:main} to this
progression gives the claim.
\end{proof}

\begin{proof}[Proof of \corref{cor:residue-effective}]
Apply \thmref{thm:main} to
\[
P=\{a\in[1,n]:a\equiv r\cmod{m}\}.
\]
The elementary estimate for sums of powers in a fixed residue class is
\begin{equation}\label{eq:residue-power-reference}
\sum_{\substack{1\leq a\leq n\\ a\equiv r\cmod{m}}}a^\ell
=\frac{1}{m(\ell+1)}n^{\ell+1}+O(n^\ell).
\end{equation}
Multiplying by \(|S|/n\), the extra error is
\(O(|S|n^{\ell-1})\), which is absorbed by
\(O(n^\ell\Phi(S,n)\log(2n))\), since \(|S|\leq n\) and
$$
\Phi(S,n)\ge n^{1/2}
\left(
n^{-1/4}
\right)^{1/2}=n^{3/8}.
$$ 
Hence, \thmref{thm:main} together with
\eqref{eq:residue-power-reference} implies
\eqref{eq:residue-effective} for \(\ell\geq 1\). The case
\(\ell=0\) follows similarly from
\(|P|=n/m+O(1)\).
If \(|S|=S_n\), then \eqref{eq:maximal-sidon-size} gives
\(|S|=n^{1/2}+O(n^{21/80})\).  Then
\[
\Phi(S,n)
\ll
n^{1/2}\left(n^{-19/80}+n^{-1/4}\right)^{1/2}
\ll n^{61/160}.
\]
Taking \(\ell=1\) in
\eqref{eq:residue-effective} and replacing \(|S|\) in the main term
therefore gives the desired result.
\end{proof}

\subsection{Proof of the weighted theorem and maximal corollary}

\begin{proof}[Proof of \thmref{thm:rank-weighted-local}]
Put
\[
E=\Phi(S,n)\log(2n)
\]
and
\[
P=\{a\in I:a\equiv r\cmod{m}\}.
\]
The set \(P\) is a finite arithmetic progression. For \(a_i\in S\), the
index \(i\) is the initial counting function of \(S\) at \(a_i\). Hence
\lemref{lem:progression-discrepancy}, applied to the interval
\([1,a_i]\), gives
\begin{equation}\label{eq:index-approx}
i=\frac{t}{n}a_i+O(E).
\end{equation}
If \(s=0\), the formula is exactly \corref{cor:local-residue}.
Assume now that \(s\geq 1\). Put \(y_i=t a_i/n\). Since
\(0\leq i,y_i\leq t\), we have
\[
i^s-y_i^s=(i-y_i)\sum_{j=0}^{s-1}i^{s-1-j}y_i^j.
\]
Together with \eqref{eq:index-approx}, this gives
\begin{equation}\label{eq:rank-power-approx}
i^s=\left(\frac{t}{n}a_i\right)^s+O_s(t^{s-1}E).
\end{equation}
Therefore
\begin{equation}\label{eq:rank-weighted-reduction}
\sum_{\substack{1\leq i\leq t\\ a_i\in I\\ a_i\equiv r\cmod{m}}}
i^s a_i^\ell
=
\frac{t^s}{n^s}
\sum_{a\in S\cap P}a^{s+\ell}
+O_s(t^s n^\ell E).
\end{equation}
Applying \corref{cor:local-residue} with \(\ell\) replaced by
\(s+\ell\), we get
\begin{equation}\label{eq:rank-local-power-input}
\sum_{a\in S\cap P}a^{s+\ell}
=
\frac{t}{n}\sum_{a\in P}a^{s+\ell}
+O_{s,\ell,m}(n^{s+\ell}E).
\end{equation}
Combining \eqref{eq:rank-weighted-reduction} and
\eqref{eq:rank-local-power-input} proves \eqref{eq:rank-local}.

Taking \(I=[1,n]\) and using
\begin{equation}\label{eq:rank-residue-power-reference}
\sum_{\substack{1\leq a\leq n\\ a\equiv r\cmod{m}}}a^{s+\ell}
=\frac{1}{m(s+\ell+1)}n^{s+\ell+1}+O_{s,\ell,m}(n^{s+\ell})
\end{equation}
gives \eqref{eq:rank-residue}. The extra term obtained after
multiplication by \(t^{s+1}/n^{s+1}\) is
\(O(t^{s+1}n^{\ell-1})\), which is absorbed by the stated error term.
\end{proof}

\begin{proof}[Proof of \corref{cor:maximal}]
By \eqref{eq:maximal-sidon-size}, if \(t=S_n\), then
\[
\Phi(S,n)
\ll
n^{1/2}\left(n^{-19/80}+n^{-1/4}\right)^{1/2}
\ll n^{61/160}.
\]
Substituting this bound into \thmref{thm:rank-weighted-local}, with
\(I=[1,n]\), gives
\begin{equation}\label{eq:maximal-rank-before-t-replacement}
\sum_{\substack{1\leq i\leq t\\ a_i\equiv r\cmod{m}}}
i^s a_i^\ell
=
\frac{t^{s+1}}{m(s+\ell+1)}n^\ell
+O\left(n^{\ell+s/2+61/160}\log n\right).
\end{equation}
Finally,
\begin{equation}\label{eq:maximal-t-power}
t^{s+1}=n^{(s+1)/2}+O_s\left(n^{s/2+21/80}\right),
\end{equation}
again by \eqref{eq:maximal-sidon-size}. This replacement error is
absorbed by the error term in
\eqref{eq:maximal-rank-before-t-replacement}. Combining
\eqref{eq:maximal-rank-before-t-replacement} and
\eqref{eq:maximal-t-power} gives \eqref{eq:maximal-rank-residue}.
\end{proof}

\subsection{Proof of the almost all theorem}

\begin{lemma}\label{lem:almost-all-size}
Let \(N\geq 2\) and \(\varepsilon>0\). For all but
\(O(N^{4/5+\varepsilon})\) integers \(n\leq N\), one has
\[
S_n=n^{1/2}+O\left(n^{1/4}\right).
\]
The exceptional set constant depends on \(\varepsilon\), and the implied
constant in the displayed estimate is absolute.
\end{lemma}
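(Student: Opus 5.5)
The plan is to sandwich \(S_n\) between two quantities and then control the gaps between consecutive primes for almost all \(n\). The upper bound is unconditional: by Erdős--Turán, in the sharpened form of Lindstr\"om, \(S_n\le n^{1/2}+n^{1/4}+1\). So it suffices to prove the lower bound \(S_n\ge n^{1/2}-Cn^{1/4}\) for all but \(O(N^{4/5+\varepsilon})\) integers \(n\le N\).

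For the lower bound we use Singer's construction. For each prime \(p\) it gives a Sidon set of \(p+1\) residues modulo \(p^2+p+1\). Taking the representatives in \([1,p^2+p+1]\) gives a Sidon set of integers of size \(p+1\) inside \([1,p^2+p+1]\). Since \(S_n\) is nondecreasing in \(n\), we get \(S_n\ge p+1\) whenever \(p^2+p+1\le n\). Now let \(p\) be the largest prime with \(p^2+p+1\le n\). If the next prime \(p'\) satisfies \(p'-p\le Cp^{1/2}\), then \(n<p'^2+p'+1\) gives \(n^{1/2}\le p'+1\le p+Cp^{1/2}+1\). Hence \(S_n\ge p+1\ge n^{1/2}-Cn^{1/4}\), which is the desired estimate.

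The \(n\) for which this fails lie in ranges \(p^2\le n<p'^2+p'+1\), where \(p<p'\) are consecutive primes with \(d=p'-p>Cp^{1/2}\). Such a range has length \(\ll p\,d\). The number of exceptional \(n\le N\) is therefore
\[
\ll \sum_{\substack{p\le N^{1/2}\\ d_p>Cp^{1/2}}} p\,d_p\ll N^{1/2}\sum_{\substack{p\le N^{1/2}\\ d_p>Cp^{1/2}}} d_p .
\]
To finish we cite Heath-Brown's estimate for sums of large prime gaps, the input used by Balasubramanian and Dutta~\cite{BalasubramanianDutta}. Split dyadically with \(p\in(X,2X]\) and \(X\le N^{1/2}\). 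In the form we need, that estimate gives \(\sum_{X<p\le 2X,\ d_p>X^{1/2}}d_p\ll X^{3/5+\varepsilon}\). The dyadic block at \(X\) then contributes \(\ll X\cdot X^{3/5+\varepsilon}=X^{8/5+\varepsilon}\). Summing over \(X\le N^{1/2}\) gives a total of \(N^{4/5+\varepsilon}\), as required.

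The main obstacle is quoting Heath-Brown's bound in exactly the needed form, as a sum over gaps exceeding \(X^{1/2}\) (unconditional, with exponent \(3/5+\varepsilon\)). We would follow~\cite{BalasubramanianDutta}, who derive this same \(N^{4/5+\varepsilon}\) exceptional-set count. The remaining steps, namely monotonicity of \(S_n\), the Singer embedding, and the Lindstr\"om upper bound, are routine.
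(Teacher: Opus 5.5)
Your proposal is correct and follows essentially the same route as the paper: the Erd\H{o}s--Tur\'an upper bound, a Singer-type lower bound carried across the interval between consecutive primes by monotonicity of $S_n$, and a count of exceptional $n$ by $\ll N^{1/2}\sum g_j$ over large gaps $g_j\ge p_j^{1/2}$ with $p_j\le\sqrt{N}$, finished by Heath-Brown's bound. The differences are cosmetic. You use the $p+1$ residues modulo $p^2+p+1$ where the paper uses $S_n\ge p_j$ for $n\ge p_j^2-1$. Your dyadic split is also unnecessary: choosing $C\ge 1$ makes $d_p>Cp^{1/2}$ imply $d_p\ge p^{1/2}$, so the cumulative form $\sum_{p\le x,\ d_p\ge p^{1/2}}d_p\ll x^{3/5+\varepsilon}$ at $x=N^{1/2}$ applies directly, as in the paper, and you do not need the slightly different threshold $X^{1/2}$.
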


\begin{proof}
The upper bound \(S_n\leq n^{1/2}+O(n^{1/4})\) is the classical theorem
of Erdős and Turán. We prove the corresponding lower bound for almost all
\(n\), following the prime gap argument used by Balasubramanian and
Dutta~\cite{BalasubramanianDutta}.

Let \(p_j\) be the \(j\)-th prime and put \(g_j=p_{j+1}-p_j\).
Heath-Brown's theorem~\cite[Theorem~1]{HeathBrown} gives, for every
\(\varepsilon>0\),
\begin{equation}\label{eq:heath-brown-large-gaps}
\sum_{\substack{p_j\leq x\\ g_j\geq p_j^{1/2}}}g_j
\ll x^{3/5+\varepsilon},
\end{equation}
with the implied constant depending on \(\varepsilon\). If
\[
p_j^2-1\leq n<p_{j+1}^2-1
\]
and \(g_j<p_j^{1/2}\), Singer's construction gives \(S_n\geq p_j\), and
also
\[
n^{1/2}-p_j< p_{j+1}-p_j=g_j<p_j^{1/2}\ll n^{1/4}.
\]
Thus \(S_n\geq n^{1/2}-O(n^{1/4})\) for all such \(n\).

It remains to count the \(n\leq N\) lying in intervals belonging to a bad
gap \(g_j\geq p_j^{1/2}\). Their number is at most
\[
\sum_{\substack{p_j\leq \sqrt{N+1}\\ g_j\geq p_j^{1/2}}}
\left(p_{j+1}^2-p_j^2\right)
\ll
\sqrt N
\sum_{\substack{p_j\leq \sqrt{N+1}\\ g_j\geq p_j^{1/2}}}g_j
\ll N^{4/5+\varepsilon},
\]
where Bertrand's postulate is used in the middle estimate and
\eqref{eq:heath-brown-large-gaps} is used in the final estimate. The
final implied constant depends on \(\varepsilon\). This proves the lemma.
\end{proof}

\begin{proof}[Proof of \thmref{thm:maximal-almost-all}]
For all \(n\leq N\) outside the exceptional set in
\lemref{lem:almost-all-size}, a maximal Sidon set satisfies
\[
\left|1-\frac{|S|}{n^{1/2}}\right|
\ll n^{-1/4}.
\]
Hence \(\Phi(S,n)\ll n^{3/8}\). Applying
\thmref{thm:rank-weighted-local}, with \(I=[1,n]\), gives
\begin{equation}\label{eq:almost-all-rank-before-t-replacement}
\sum_{\substack{1\leq i\leq t\\ a_i\equiv r\cmod{m}}}
i^s a_i^\ell
=
\frac{t^{s+1}}{m(s+\ell+1)}n^\ell
+O\left(n^{\ell+s/2+3/8}\log n\right).
\end{equation}
Since \(t=n^{1/2}+O(n^{1/4})\), we may replace \(t^{s+1}\) by
\(n^{(s+1)/2}\) in the main term. The error term introduced by this
replacement is \(O_s(n^{\ell+s/2+1/4})\), and is absorbed by the
error term in \eqref{eq:almost-all-rank-before-t-replacement}. This
proves \eqref{eq:almost-all-rank-residue}, and the last display in the
statement is the case \(s=0\) and \(\ell=1\).
\end{proof}

\begin{remark}
The proof gives more than the displayed asymptotic formula. It shows that
both the power sum problem in residue class and its weighted analogue
are controlled by a uniform estimate for the discrepancy of \(S\) on
finite arithmetic progressions. Any improvement in the Fourier uniformity
exponent for extremal Sidon sets would immediately improve the error term
in \thmref{thm:main} and \thmref{thm:rank-weighted-local}. The almost all
improvement in \thmref{thm:maximal-almost-all} instead comes from
improving the size defect of maximal Sidon sets for almost all \(n\).
\end{remark}

\section*{AI Disclosure}
During the preparation of this manuscript, the author used OpenAI Codex as an auxiliary tool. The tool was used to discuss possible applications of known Fourier uniformity results for dense finite Sidon sets, to explore possible extensions to weighted sums, and to improve the exposition of some arguments. The author did not use AI-generated text without revision and independently checked all mathematical claims, proofs, and references. The author takes full responsibility for the content of the manuscript.

\section*{Declaration of interests}
The author declares that there is no conflict of interest in this research.

\end{document}